\numberwithin{equation}{section}
\newtheorem{theorem}{Theorem}[section]
\newtheorem{lemma}[theorem]{Lemma}
\newtheorem{corollary}[theorem]{Corollary}
\newtheorem{exm}[theorem]{Example}
\newenvironment{example}{\begin{exm}\rm}{\end{exm}}
\newcommand\V{\bigvee}
\newcommand\Max{\operatorname{Max}}
\newcommand\ie{i.e.}
\newcommand\st{\mid}
\newcommand\cf{\textrm{cf.}}
\newcommand\topology{\operatorname{\Omega}}
\newcommand\Loc{\textit{Loc}}
\newcommand\Frm{\textit{Frm}}
\newcommand\opp[1]{{#1}^{\textrm{op}}}
\newcommand\CC{\mathbb{C}}
\newcommand\QS{\textit{QSp}}
\newcommand\Qu{\textit{Qu}}
\newcommand\qtimes{\operatorname{\hat{\otimes}}}
\newcommand\downsegment{{\downarrow}}
\newcommand\shortiff{{\Leftrightarrow}}
\begin{document}

\title{Open maps of involutive quantales\footnote{Work funded by FCT/Portugal through project PEst-OE/EEI/LA0009/2013 and by COST (European Cooperation in Science and Technology) through COST Action MP1405 QSPACE.}}

\date{~}

\author{Pedro Resende}

\maketitle

\begin{abstract}
By a map $p:Q\to X$ of involutive quantales is meant a homomorphism $p^*:X\to Q$. Calling a map $p$ \emph{weakly open} if $p^*$ has a left adjoint $p_!$ which satisfies the Frobenius reciprocity condition (\ie, $p_!$ is a homomorphism of $X$-modules), we say that $p$ is \emph{open} if it is stably weakly open. We also study a two-sided version, FR2, of the Frobenius reciprocity condition, and show that the weakly open surjections that satisfy FR2 are open. Maps of the latter kind arise in the study of Fell bundles on groupoids.
\\
\vspace*{-2mm}~\\
\textit{Keywords:} Involutive quantales, open maps, Frobenius reciprocity condition.\\
\vspace*{-2mm}~\\
2010 \textit{Mathematics Subject
Classification}: 06F07, 18B30
\end{abstract}

\setcounter{tocdepth}{1}
\tableofcontents

\section{Introduction}\label{sec:introduction}

Quantales, in particular involutive quantales, are generalizations of locales which often can be regarded as generalized spaces. For instance, there are several notions of point of a quantale \cites{K02,MP1,MR}, 
and there are correspondences between quantales and other types of generalized space such as Grothendieck toposes \cites{HeymansGrQu,HenryPhD,GSQS,HS2}, C*-algebras \cites{Rosicky,MP1,MP2,BRB,KR}, or groupoids \cites{Re07,PR12}. To some extent the geometric meaning of quantales is borrowed from such structures, and also from the functoriality of the correspondences, which in turn depends on which notion of morphism of quantales we adopt.

No choice of morphism works for all purposes.
For instance, inverse quantal frames~\cite{Re07} form a bicategory which is biequivalent to that of \'etale groupoids~\cite{Re15}, and this suggests that ``geometric maps'' of these quantales can be defined in terms of quantale bimodules so as to correspond to those groupoid bi-actions which in turn yield geometric morphisms of \'etendues \cites{Moer90,Moer87,Bunge}. (A similar notion of morphism of groupoids is relevant in the context of strong Morita equivalence for C*-algebras~\cites{MRW87,La01,Mr99}.) Another more immediate (and in general different) notion is based on homomorphisms of involutive quantales, which correspond, for groupoids, to the algebraic morphisms of \cites{BS05, Bun08}. See \cite{Re15}. Homomorphisms are also natural when relating C*-algebras and quantales, since $*$-homomorphisms of C*-algebras translate functorially to involutive homomorphisms between their quantales~\cite{MP1}. And another C*-algebra-related context, in fact the one which motivates the present paper, is that of Fell bundles on groupoids~\cite{Kumjian98}, where again quantale homomorphisms arise naturally~\cite{QFB}.

Here we shall be concerned with general involutive homomorphisms of involutive quantales or, rather, with their formal duals as in locale theory~\cite{stonespaces}: by a \emph{(continuous) map} of involutive quantales $p:Q\to X$ will be meant an involutive homomorphism $p^*:X\to Q$, \ie, a mapping that satisfies the following conditions for all $x,y\in X$ and all families $(x_i)$ in $X$:
\begin{eqnarray*}
p^*(xy) &=& p^*(x)p^*(y)\\
p^*(x^*)&=& \bigl(p^*(x)\bigr)^*\\
p^*\bigl(\V_i x_i\bigr)&=&\V_i p^*(x_i)
\end{eqnarray*}
This is referred to as the \emph{inverse image homomorphism} of $p$. Following the terminology for locales in~\cite{RV}, if $p^*$ has a (necessarily involution preserving) left adjoint $p_!$ we say that $p$ is \emph{semiopen}, and refer to $p_!$ as the \emph{direct image homomorphism} of $p$.

The main purpose of this short paper is to examine conditions under which a semiopen map of involutive quantales can be regarded as being open. This is not for the sake of generalization per se, but rather because examples of open-like maps arise from some Fell bundles on groupoids (see section~\ref{sec:frob}), and it is worth understanding their properties in particular as regards stability under pullbacks.

\section{Preliminaries}\label{sec:subspaces}

This section is mostly for fixing terminology and notation.

\paragraph{Quantic subspaces.} We shall denote by $\Qu$ the usual category of involutive quantales whose arrows are the involutive homomorphisms, and by $\QS$ the opposite category $\opp\Qu$, whose arrows are the maps. We refer to $\QS$ as the category of \emph{quantic spaces}, in order to disambiguate our terminology, for instance when referring to subobjects: whereas by an \emph{involutive subquantale} is usually meant a subobject of an involutive quantale in the algebraic sense, in this paper a \emph{quantic subspace} of an involutive quantale $Q$ is defined to be an equivalence class of regular monomorphisms $m:S\to Q$ in $\QS$; that is, an equivalence class of maps $m$ such that $m^*$ is a surjective homomorphism, where two such maps $m:S\to Q$ and $m':S'\to Q$ are equivalent if there is an isomorphism $S\cong S'$ commuting with $m$ and $m'$.\footnote{We note that the terminology `quantic space versus quantale' is consistent with the terminology `space versus locale' of Joyal and Tierney~\cite{JT}, albeit not with the more common terminology `locale versus frame'. An alternative would be to use `quantale' in the topological sense and, say, `quantic frame' in the algebraic one. However, the ensuing meaning of  `subquantale' would go against the usual terminology in quantale theory, and, moreover, `quantic frame' is misleadingly close to `quantal frame', which has been consistently used by more than one author in order to refer to a quantale whose order satisfies the locale distributivity law.}

A quantic subspace of $Q$ can also be identified with an \emph{involutive quantic nucleus} on $Q$, \ie, a closure operator on $Q$ that satisfies $j(a)j(b)\le j(ab)$ and $j(a^*)=j(a)^*$ for all $a,b\in Q$, as we now explain.
Let $Q$ be an involutive quantale, and let $j:Q\to Q$ be an involutive quantic nucleus.
The set of closed elements
\[
Q_j=j(Q)=\{a\in Q\st j(a)=a\}
\]
is closed under meets and involution, and it is an involutive quantale with multiplication defined by $(a,b)\mapsto j(ab)$ and the join of each family $(a_i)$ in $Q_j$ being $j\bigl(\V_i a_i\bigr)$. Moreover, $j:Q\to Q_j$ is a surjective homomorphism of involutive quantales, and, up to isomorphisms, every surjective homomorphism of involutive quantales arises like this. For a proof of this (for non involutive quantales) see \cite{Rosenthal1}. Hence, the involutive quantic nuclei on $Q$ are in bijective correspondence with quantic subspaces of $Q$: given a regular monomorphism $m:S\to Q$ in $\QS$ we have $S\cong Q_j$, where $j=m_*\circ m^*$ and $m_*$ is the right adjoint of $m^*$.

\paragraph{Quantic subspaces presented by relations.} Let $Q$ be an involutive quantale. If $R\subseteq Q\times Q$ is a binary relation on $Q$, the least (in the pointwise order) involutive quantic nucleus $j$ on $Q$ such that $j(r)=j(s)$ for all $(r,s)\in R$ is denoted by $j_R$, and
\[
\iota_R:Q_{j_R}\to Q
\]
is the regular monomorphism defined by $\iota_R^*=j_R$.
Any regular monomomorphism \[\xi:S\to Q\] such that $\xi^*(r)=\xi^*(s)$ for all $(r,s)\in R$ factors (uniquely) through $\iota_R$:
\[
\xymatrix{
S\ar[rrrr]^{\xi}\ar[drr]&&&&Q\\
&&Q_{j_R}\ar[rru]_{\iota_R}
}
\]
This provides us with the construction of equalizers in $\QS$: the equalizer of two maps $f,g:Q\to X$ is the quantic subspace $\iota_R:Q_{j_R}\to Q$ for $R=\{(f^*(x),g^*(x))\st x\in X\}$.

The following facts are easily derived from \cite{JT} and are useful in calculations in order to translate the universal property of $Q_{j_R}$ into sup-lattices:

\begin{lemma}\label{relations}
Let $Q$ be an involutive quantale, let $R\subseteq Q\times Q$ be a binary relation on $Q$, and let $\widetilde R\subseteq Q\times Q$ be the least binary relation on $Q$ such that the following conditions hold:
\begin{itemize}
\item $R\subseteq\widetilde R$,
\item $(r^*,s^*)\in\widetilde R$ for all $(r,s)\in\widetilde R$,
\item $(ar,as)\in\widetilde R$ for all $(r,s)\in\widetilde R$ and all $a\in Q$.
\end{itemize}
The following equivalent conditions hold:
\begin{enumerate}
\item The quantic nucleus $j_R$ coincides with the least closure operator $j$ on $Q$ such that $j(r)=j(s)$ for all $(r,s)\in\widetilde R$.
\item $Q_{j_R}$ consists of those $\alpha\in Q$ such that for all $(r,s)\in \widetilde R$ the following condition holds:
\begin{eqnarray*}
r\le \alpha&\iff& s\le \alpha\;.
\end{eqnarray*}
\item Any sup-lattice homomorphism $h:Q\to L$, where $L$ is a sup-lattice, factors through the quotient homomorphism $j_R:Q\to Q_{j_R}$ if and only if for all $(r,s)\in \widetilde R$ we have
$h(r)=h(s)$.
\end{enumerate}
\end{lemma}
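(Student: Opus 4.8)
The plan is to prove condition~(1) first and then read off (2) and (3) from the standard correspondence between closure operators, their systems of closed elements, and the sup-lattice quotients they present, which is the part ``easily derived from \cite{JT}''. The key to~(1) is to compare two least operators: on one side the least involutive quantic nucleus identifying $R$, which is $j_R$ by definition; on the other the least closure operator $j_0$ with $j_0(r)=j_0(s)$ for all $(r,s)\in\widetilde R$. I would show these coincide. Note that the naive ``for all closure operators'' version fails (for $R=\emptyset$ every closure operator identifies $\widetilde R=\emptyset$, yet not every closure operator is a nucleus), so the argument must treat the two inequalities asymmetrically.

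The easy half is that every involutive quantic nucleus $j$ with $j(r)=j(s)$ on $R$ also identifies $\widetilde R$: the relation $\{(r,s)\st j(r)=j(s)\}$ contains $R$ and is stable under the three generating operations, involution being immediate from $j(a^*)=j(a)^*$ and left multiplication from $ar\le a\,j(r)=a\,j(s)\le j(a)j(s)\le j(as)$ (with the reverse inequality by symmetry). Consequently any such nucleus dominates $j_0$. For the converse I must show $j_0$ is itself an involutive quantic nucleus. Here I may use the explicit description of its closed elements, $Q_{j_0}=C:=\{\alpha\st r\le\alpha\iff s\le\alpha \text{ for all }(r,s)\in\widetilde R\}$, valid precisely because $j_0$ is the \emph{least} such operator ($C$ is closed under arbitrary meets, hence is a closure system). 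Since $\widetilde R$ is involution-closed and involution is an order isomorphism, $C$ is stable under $*$, giving $j_0(a^*)=j_0(a)^*$.

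The main obstacle is multiplicativity, $j_0(a)j_0(b)\le j_0(ab)$, which I would extract from closure of $\widetilde R$ under multiplication via residuation. First, $\widetilde R$ is automatically closed under right multiplication too, since $(r,s)\in\widetilde R$ forces successively $(r^*,s^*)$, $(ar^*,as^*)$, and $(ra^*,sa^*)$ in $\widetilde R$. Using $ax\le\alpha\iff x\le a\backslash\alpha$ and $xa\le\alpha\iff x\le\alpha/a$, closure of $\widetilde R$ under left and right multiplication translates into stability of $C$ under both residuals $a\backslash\alpha$ and $\alpha/a$. Taking $\alpha=j_0(ab)$, from $ab\le\alpha$ one gets $b\le a\backslash\alpha$ with $a\backslash\alpha$ closed, forcing $j_0(b)\le a\backslash\alpha$, i.e.\ $a\,j_0(b)\le j_0(ab)$; symmetrically $j_0(a)\,b\le j_0(ab)$. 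Applying the second with $j_0(b)$ in place of $b$ and then the first with idempotency of $j_0$ yields $j_0(a)j_0(b)\le j_0(a\,j_0(b))\le j_0(ab)$. Thus $j_0$ is an involutive quantic nucleus identifying $R$, and being dominated by every such nucleus it equals $j_R$, proving~(1).

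Finally, (2) is the identity $Q_{j_R}=C$ just used, now read off from~(1). For~(3), I would regard $j_R$ as a sup-lattice quotient: a sup-lattice homomorphism $h\colon Q\to L$ factors through it iff $h_*h\ge j_R$, equivalently iff every $h_*(l)$ is $j_R$-closed; by~(2) and the adjunction $r\le h_*(l)\iff h(r)\le l$, this holds exactly when $h(r)=h(s)$ for all $(r,s)\in\widetilde R$. The mutual equivalence of the three conditions is then immediate from this chain of reductions.
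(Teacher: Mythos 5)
The paper offers no proof of this lemma at all—it is stated as "easily derived from \cite{JT}"—so there is nothing to compare line by line; your argument is correct and is exactly the standard derivation the paper alludes to. All the key steps check out: the closed elements of the least closure operator identifying $\widetilde R$ are precisely the set $C=\{\alpha\st r\le\alpha\shortiff s\le\alpha \text{ for all }(r,s)\in\widetilde R\}$ (since $C$ is a closure system and least operator corresponds to largest closure system); closure of $\widetilde R$ under involution and under left (hence, as you note, also right) multiplication makes $C$ stable under $(-)^*$ and under both residuals, which yields that $j_0$ is an involutive quantic nucleus and hence equals $j_R$; and statement (3) follows correctly from the adjoint characterization of factorization through a sup-lattice quotient together with (2).
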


\section{Open maps}\label{sec:frob}

\paragraph{Locales.} Let us first recall open maps of locales (see \cite{JT}).
A map of locales $p:L\to X$ is open if the image of any open sublocale of $L$ is an open sublocale of $X$. Equivalently, $p$ is open if and only if the inverse image homomorphism $p^*:X\to L$ has a left adjoint $p_!:L\to X$ which is a homomorphism of $X$-modules; that is, such that for all $x\in X$ and $a\in L$ the Frobenius reciprocity condition holds:
\[
p_!(a\wedge p^*(x)) = p_!(a)\wedge x\;.
\]
Then it is also true that a sublocale of $X$ is open if and only if its inclusion into $X$ in $\Loc=\opp\Frm$ is an open map. Moreover, the open sublocales of $X$ can be identified with elements $u\in X$: the quotient that determines the corresponding open sublocale can be taken to be $(-)\wedge u: X\to\downsegment u$. Finally, one important property of open maps is their stability under pullbacks: if the following is a pullback diagram in $\Loc$ and $p$ is open then so is $\pi_1$,
\[
\xymatrix{
Y\otimes_X L\ar[rr]^-{\pi_2}\ar[d]_{\pi_1}&&L\ar[d]^p\\
Y\ar[rr]_{f}&&X
}
\]
and the following diagram is commutative (Beck--Chevalley condition):
\[
\xymatrix{
Y\otimes_X L\ar@{<-}[rr]^-{\pi^*_2}\ar[d]_{{\pi_1}_!}&&L\ar[d]^{p_!}\\
Y\ar@{<-}[rr]_{f^*}&&X
}
\]

\paragraph{Frobenius reciprocity conditions.} A straightforward generalization of the facts above does not exist for an involutive quantale $X$, as there is no a priori known notion of open quantic subspace of $X$, and the elements $u\in X$ do not determine quotients of $X$ in $\Qu$ in a canonical way. So we shall look at maps of involutive quantales satisfying conditions that mimick the Frobenius reciprocity condition of locales, at the same time keeping in mind that any reasonable notion of open map should be such that pullbacks of open maps are open.
Namely, we shall examine the following conditions for a semiopen map $p:Q\to X$ of involutive quantales:

\begin{description}
\item[FR1:] $p_!(a p^*(x)) = p_!(a) x$ for all $x\in X$ and $a\in Q$;
\item[FR2:] $p_!(a p^*(x) b) = p_!(a)x p_!(b)$ for all $x\in X$ and $a,b\in Q$.
\end{description}

The first condition, FR1, which we refer to as the \emph{one-sided Frobenius reciprocity condition}, is an immediate generalization of the Frobenius reciprocity condition of locales, stating that $p_!$ is a homomorphism of left $X$-modules. We shall call a semiopen map that satisfies FR1 \emph{weakly open}, following~\cite{QFB}. We note that, due to the involution, FR1 is equivalent to the analogous condition applied to right $X$-modules:

\begin{lemma}
Let $p:Q\to X$ be a weakly open map. Then for all $a\in Q$ and $x\in X$ we have
\[
p_!(p^*(x)a)=xp_!(a)\;.
\]
\end{lemma}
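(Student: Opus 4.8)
The plan is to reduce the desired right $X$-module identity to FR1 by means of the involution, exploiting two facts: that the involution is an order-preserving anti-homomorphism, so that $(ab)^* = b^* a^*$ and $a^{**}=a$ for all $a,b\in Q$, and that both $p^*$ and $p_!$ preserve the involution. The first holds by the very definition of a map of involutive quantales, and the second because $p_!$ is the (necessarily involution-preserving) left adjoint of $p^*$, as noted in the introduction.

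First I would apply the involution to the left-hand side of the asserted identity. Using that $p_!$ preserves involution and that the involution reverses products, I compute
\[
\bigl(p_!(p^*(x)a)\bigr)^* = p_!\bigl((p^*(x)a)^*\bigr) = p_!\bigl(a^*\,(p^*(x))^*\bigr) = p_!\bigl(a^* p^*(x^*)\bigr),
\]
where the last step uses $(p^*(x))^* = p^*(x^*)$. The argument is now exactly of the shape to which FR1 applies, and FR1 yields $p_!\bigl(a^* p^*(x^*)\bigr) = p_!(a^*)\,x^*$.

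Next I would do the same to the right-hand side: again using that the involution reverses products and that $p_!$ preserves involution,
\[
\bigl(x\,p_!(a)\bigr)^* = (p_!(a))^*\,x^* = p_!(a^*)\,x^*.
\]
Hence $\bigl(p_!(p^*(x)a)\bigr)^*$ and $\bigl(x\,p_!(a)\bigr)^*$ coincide. Applying the involution once more and using $a^{**}=a$, I conclude $p_!(p^*(x)a) = x\,p_!(a)$, as required.

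There is no genuine obstacle here: the statement is a formal consequence of FR1 together with the compatibility of all the maps in sight with the involution. The only point needing a little care is the bookkeeping of the order-reversal, namely remembering that $(p^*(x)a)^* = a^* p^*(x^*)$ rather than $p^*(x^*)\,a^*$; getting this right is precisely what converts the \emph{left} $X$-module identity supplied by FR1 into the \emph{right} $X$-module identity in the statement.
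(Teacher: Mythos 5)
Your proposal is correct and follows essentially the same route as the paper: both arguments convert the right $X$-module identity into an instance of FR1 via the involution, using that $p_!$ and $p^*$ preserve involution and that the involution reverses products. The paper merely packages the computation as a single chain of equalities starting from $p_!(p^*(x)a)=p_!\bigl(p^*(x)a\bigr)^{**}$, whereas you apply the involution to both sides and compare, which is the same argument.
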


\begin{proof}
Let $a\in Q$ and $x\in X$. Then $p_!\bigl(p^*(x)a\bigr)=p_!\bigl(p^*(x)a\bigr)^{**}=p_!\bigl((p^*(x)a)^*\bigr)^*=p_!\bigl(a^*p^*(x)^*\bigr)^*
=p_!\bigl(a^*p^*(x^*)\bigr)^*
=\bigl(p_!(a^*)x^*\bigr)^* = \bigl(p_!(a)^*x^*\bigr)^*=x p_!(a)$.
\end{proof}

Another simple property of weakly open maps is the following:

\begin{lemma}\label{lem:wos}
Let $p:Q\to X$ be a weakly open map with $X$ a unital involutive quantale. Then $p$ is a surjection if and only if $p_!(p^*(e))=e$.
\end{lemma}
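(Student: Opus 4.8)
The plan is to reduce the statement to the single identity $p_!\circ p^*=\textrm{id}_X$, and then to observe that this identity is already detected at the unit $e$ thanks to FR1. I would use the characterization that $p$ is a surjection precisely when its inverse image homomorphism $p^*$ is injective (a monomorphism in $\Qu$), so that it suffices to prove that $p^*$ is injective if and only if $p_!(p^*(e))=e$.

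First I would record what the adjunction $p_!\dashv p^*$ alone gives for monotone maps between complete lattices: the counit inequality $p_!(p^*(x))\le x$ and the triangle identity $p^*p_!p^*=p^*$. From the triangle identity, injectivity of $p^*$ is equivalent to $p_!p^*=\textrm{id}_X$: indeed, if $p^*$ is injective then $p^*(p_!p^*(x))=p^*(x)$ forces $p_!p^*(x)=x$, while conversely $p_!p^*=\textrm{id}_X$ exhibits $p_!$ as a left inverse of $p^*$. Hence the whole lemma comes down to the equivalence $p_!(p^*(e))=e\iff p_!p^*=\textrm{id}_X$.

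The heart of the matter is a single computation, valid for every $x\in X$:
\[
p_!\bigl(p^*(x)\bigr)=p_!\bigl(p^*(e)\,p^*(x)\bigr)=p_!\bigl(p^*(e)\bigr)\,x\;,
\]
where the first equality uses that $p^*$ is multiplicative and $e$ is a unit, so $p^*(e)p^*(x)=p^*(ex)=p^*(x)$, and the second is FR1 applied with $a=p^*(e)$. Given this, both implications are immediate: if $p_!p^*=\textrm{id}_X$ then in particular $p_!(p^*(e))=e$; conversely, if $p_!(p^*(e))=e$, the displayed identity becomes $p_!(p^*(x))=ex=x$ for all $x$, that is, $p_!p^*=\textrm{id}_X$. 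I expect no genuine obstacle here; the only point requiring care is the ``bootstrapping'' role of FR1, which is exactly what lets a surjectivity statement about all of $X$ be tested at the unit alone — without FR1 one could not promote the equation at $e$ to the full identity $p_!p^*=\textrm{id}_X$.
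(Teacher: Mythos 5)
Your proof is correct and follows essentially the same route as the paper: the key step in both is the FR1 computation $p_!(p^*(x))=p_!\bigl(p^*(e)p^*(x)\bigr)=p_!(p^*(e))\,x$, which promotes the equation at the unit $e$ to all of $X$. The only difference is that you additionally derive the characterization of surjectivity (namely $p_!\circ p^*=\mathrm{id}_X$, equivalently $p^*$ injective) from the adjunction triangle identity, whereas the paper simply takes this equivalence as its starting point.
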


\begin{proof}
$p$ is a surjection if and only if for all $x\in X$ we have $p_!(p^*(x))=x$, so one implication is trivial. Let then $p_!(p^*(e))=e$ and assume that $p$ is weakly open. Then for all $x\in X$ we have
$p_!(p^*(x))=p_!(p^*(ex))=p_!(p^*(e)p^*(x))=p_!(p^*(e))x=ex=x$.
\end{proof}

Contrary to the situation with locales, it is not to be expected that weakly open maps should be stable under pullbacks in $\QS$ (\cf\ section~\ref{sec:stability}).  Hence, as a working definition of openness, even if a naive one, let us say that a semiopen map $p:Q\to X$ is \emph{open} if for all maps $f:Y\to X$ the pullback $f^*(p)$ in $\QS$ of $p$ along $f$ is weakly open:
\[
\xymatrix{
P\ar[rr]\ar[d]_{f^*(p)}&&Q\ar[d]^p\\
Y\ar[rr]_{f}&&X
}
\]
It is clear that identity maps are open, and that the class of open maps is closed under composition, so the open maps define a subcategory of $\QS$. And, 
since any pullback of $f^*(p)$ is itself isomorphic to a pullback of $p$, this subcategory is closed under pullbacks along arbitrary maps:

\begin{lemma}
Let $p:Q\to X$ be an open map of involutive quantales, and let $f:Y\to X$ be an arbitrary map of involutive quantales. Then the pullback $f^*(p)$ is an open map.
\end{lemma}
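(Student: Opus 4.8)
The plan is to unfold the definition of openness and reduce everything to the pasting lemma for pullback squares, which is exactly the ``any pullback of $f^*(p)$ is itself isomorphic to a pullback of $p$'' remark made just above. Recall that ``$p$ open'' means that $p$ is semiopen and that $g^*(p)$ is weakly open for \emph{every} map $g:Z\to X$; dually, to prove that $f^*(p):P\to Y$ is open it suffices to show that $h^*(f^*(p))$ is weakly open for every map $h:W\to Y$. (This single condition also subsumes semiopenness: taking $h=\mathrm{id}_Y$ exhibits $f^*(p)$ itself as weakly open, hence semiopen, so that the notion of open map is meaningful for it.)

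So first I would fix an arbitrary map $h:W\to Y$ and stack the two defining pullback squares, the lower one exhibiting $P$ as the pullback of $p$ along $f$ and the upper one exhibiting $P':=h^*(f^*(p))$ as the pullback of $f^*(p)$ along $h$:
\[
\xymatrix{
P'\ar[rr]\ar[d]_{h^*(f^*(p))}&&P\ar[rr]\ar[d]_{f^*(p)}&&Q\ar[d]^{p}\\
W\ar[rr]_{h}&&Y\ar[rr]_{f}&&X
}
\]
The key step is then to invoke the pasting lemma: since both inner squares are pullbacks, the outer rectangle is a pullback as well. This identifies $P'=h^*(f^*(p))$, up to the canonical isomorphism over $W$, with the pullback $(f\circ h)^*(p)$ of $p$ along the composite $f\circ h:W\to X$. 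This is a purely formal categorical fact, requiring only that $\QS$ possess the pullbacks in question, which is already presupposed for the definition of open map to make sense.

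Finally, because $p$ is open, the pullback $(f\circ h)^*(p)$ is weakly open by definition; transporting weak openness across the canonical isomorphism shows that $h^*(f^*(p))$ is weakly open too. As $h$ was arbitrary, $f^*(p)$ is open. I expect no genuine obstacle here, only two routine verifications: that the pasting lemma goes through in $\QS=\opp\Qu$ (equivalently, that the dual pasting of pushouts in $\Qu$ yields the iterated pullback), and that weak openness is invariant under isomorphism of maps in $\QS$. The latter holds because an isomorphism $\sigma$ in $\QS$ induces an isomorphism $\sigma^*$ of involutive quantales, which is its own inverse adjoint and preserves multiplication and the $X$-action, so composing the given left adjoint with $(\sigma^*)^{-1}$ produces a left adjoint still satisfying FR1.
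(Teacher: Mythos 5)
Your proof is correct and is essentially the paper's own argument: the lemma there is justified precisely by the preceding remark that ``any pullback of $f^*(p)$ is itself isomorphic to a pullback of $p$'', i.e.\ the pasting lemma for pullback squares in $\QS$, after which weak openness of every such pullback follows from openness of $p$. The extra verifications you flag (semiopenness via $h=\mathrm{id}_Y$ and invariance of weak openness under isomorphism of maps over the base) are routine details the paper leaves implicit.
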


Note that it is not implied that open maps of locales (in the usual sense) are necessarily open when regarded as maps in $\QS$.

The second condition, FR2, will be referred to as the \emph{two-sided Frobenius reciprocity condition}. It is not a generalization of the Frobenius reciprocity condition of locales, for in general it is not satisfied by open maps of locales, as the following shows:

\begin{lemma}
Let $p:L\to X$ be a semiopen map of locales that satisfies FR2. Then for all $a,b\in L$ we have $p_!(a\wedge b)=p_!(a)\wedge p_!(b)$.
\end{lemma}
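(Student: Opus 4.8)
The plan is to exploit the fact that, when $X$ and $L$ are locales regarded as involutive quantales, the multiplication is binary meet, the involution is the identity, and the multiplicative unit $e$ of $X$ is its top element $\top_X$. In this setting FR2 reads $p_!(a\wedge p^*(x)\wedge b)=p_!(a)\wedge x\wedge p_!(b)$ for all $x\in X$ and $a,b\in L$, and the claimed identity $p_!(a\wedge b)=p_!(a)\wedge p_!(b)$ should follow from a single well-chosen instantiation of $x$.

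First I would observe that $p^*$, being a frame homomorphism, preserves finite meets and in particular the empty meet, so that $p^*(\top_X)=\top_L$; moreover $\top_L$ is the unit of $L$ for $\wedge$, whence $a\wedge\top_L=a$ for every $a\in L$. Then I would put $x=\top_X$ in FR2. On the left-hand side the factor $p^*(\top_X)=\top_L$ is absorbed, leaving $p_!(a\wedge b)$; on the right-hand side the factor $x=\top_X$ is absorbed, leaving $p_!(a)\wedge p_!(b)$. Equating the two sides yields the assertion directly.

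There is essentially no obstacle here beyond the one conceptual point that must be made explicit, namely that the top element of a locale is its multiplicative unit and is preserved by the inverse image homomorphism. Once $x=\top_X$ is substituted, the conclusion is immediate, and no use of semiopenness is needed beyond the mere existence of $p_!$. The real content of the lemma is thus \emph{negative}: it shows that FR2 forces $p_!$ to preserve binary meets, a property that open maps of locales do not have in general, thereby confirming that FR2 is strictly stronger than — and not a generalization of — the locale Frobenius reciprocity condition.
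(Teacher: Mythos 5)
Your proof is correct and is essentially identical to the paper's: the paper likewise instantiates FR2 at the top element (writing $1_X$ for $\top_X$), using that $p^*(1_X)=1_L$ since $p^*$ is a frame homomorphism, to get $p_!(a\wedge b)=p_!(a\wedge p^*(1_X)\wedge b)=p_!(a)\wedge 1_X\wedge p_!(b)=p_!(a)\wedge p_!(b)$. Your closing remark about the lemma's negative significance also matches the paper's stated purpose for it.
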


\begin{proof}
Let $a,b\in L$. Then $p_!(a\wedge b)=p_!(a\wedge 1_L \wedge b)=p_!(a\wedge p^*(1_X)\wedge b)
=p_!(a)\wedge 1_X \wedge p_!(b)=p_!(a)\wedge p_!(b)$.
\end{proof}

\begin{example}
Let $f:X\to Y$ be a continuous open map of topological spaces, with $X$ Hausdorff. The map of locales $\topology(f):\topology(X)\to\topology(Y)$ defined by $\topology(f)^*=f^{-1}$ satisfies FR2 if and only if $f$ is injective.
\end{example}

This example also shows that in general FR1 does not imply FR2, whereas in some situations (in particular those coming from Fell bundles --- see below) FR2 implies FR1:

\begin{lemma}
Let $p:Q\to X$ be a semiopen map of involutive quantales satisfying FR2.
If $X$ is unital and $p_!(p^*(e))=e$ then $p$ is surjective and it satisfies FR1.
\end{lemma}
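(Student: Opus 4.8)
The plan is to derive FR1 directly from FR2 by a single well-chosen substitution, and then to obtain surjectivity from the earlier lemma characterizing surjective weakly open maps.

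First I would establish FR1. The key observation is that $p^*(e)$ behaves as a right unit ``as seen through $p^*$'': since $p^*$ is multiplicative and $X$ is unital, we have $p^*(x)p^*(e)=p^*(xe)=p^*(x)$ for every $x\in X$. Substituting $b=p^*(e)$ into FR2 therefore gives, for all $a\in Q$ and $x\in X$,
\[
p_!\bigl(a\,p^*(x)\bigr)=p_!\bigl(a\,p^*(x)\,p^*(e)\bigr)=p_!(a)\,x\,p_!\bigl(p^*(e)\bigr)\;.
\]
Invoking the hypothesis $p_!(p^*(e))=e$ together with unitality of $X$ collapses the right-hand side to $p_!(a)\,x$, which is exactly FR1.

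Once FR1 is in hand, the map $p$ is semiopen and satisfies FR1, hence is weakly open by definition. Since $X$ is unital and $p_!(p^*(e))=e$ by assumption, Lemma~\ref{lem:wos} then immediately yields that $p$ is a surjection.

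The whole argument reduces to a single computation, so the only real content is spotting the substitution $b=p^*(e)$; there is no genuine obstacle. The one point requiring care is the logical order: FR1 must be proved first, because the surjectivity conclusion is routed through Lemma~\ref{lem:wos}, whose hypothesis is precisely that $p$ be weakly open, and that status is only secured after FR1 has been verified.
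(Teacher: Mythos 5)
Your proof is correct and is essentially identical to the paper's: the same substitution $b=p^*(e)$ (via $p^*(x)=p^*(xe)=p^*(x)p^*(e)$) in FR2, collapsed by the hypothesis $p_!(p^*(e))=e$, followed by an appeal to Lemma~\ref{lem:wos} for surjectivity. Your remark about the logical order (FR1 before invoking Lemma~\ref{lem:wos}) matches the paper's implicit ordering as well.
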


\begin{proof}
Let $e$ be the multiplicative unit of $X$. For all $a\in Q$ and $x\in X$ we have, using FR2,
\begin{eqnarray*}
p_!(ap^*(x))&=&p_!(ap^*(xe))=p_!(ap^*(x)p^*(e))\\
&=&p_!(a)xp_!(p^*(e))=p_!(a)xe=p_!(a)x\;,
\end{eqnarray*}
and thus FR1 holds. Surjectivity of $p$ follows from Lemma~\ref{lem:wos}.
\end{proof}

As we shall see in section~\ref{sec:stability}, the conjunction of FR1 and FR2 is interesting, at least in the case of surjective maps, because the class of semiopen surjections satisfying FR1 and FR2 is closed under pullbacks. In particular, it follows that such surjections are examples of open maps according to the definition above.

\paragraph{Fell bundles.} A Fell bundle $\pi:E\to G$ on a (suitable) topological \'etale groupoid $G$ is a Banach bundle on $G$ equipped with additional structure such that, among other things, $E$ is an involutive semicategory and $p$ is functorial~\cite{Kumjian98}. Associated to a Fell bundle $\pi:E\to G$ there is a convolution algebra of sections $C_c(G,E)$ (this generalizes the usual convolution algebra $C_c(G)$ of continuous compactly supported functions $G\to\CC$) and for a large class of C*-completions $A$ of $C_c(G,E)$ we obtain maps of involutive quantales $p:\Max A\to\topology(G)$, where $\Max A$ and $\topology(G)$ are the involutive quantales associated to $A$ and $G$, respectively ($\Max A$ consists of all the norm-closed linear subspaces of $A$ with multiplication given by the topological closure of the linear span of the pointwise multiplication, and $\topology(G)$ is the topology of $G$ under pointwise multiplication). The properties of $p$ are closely related to properties of $\pi$ and $A$. In particular, the situations where $p$ is semiopen are closely related to $G$ being Hausdorff and $A$ being the reduced C*-algebra $C_r^*(G,E)$. Further imposing on $p$ conditions that approach FR2 has the effect of restricting the bundle to be a line bundle, or even force $G$ to be a principal groupoid (an equivalence relation). See~\cite{QFB}.

\begin{example}
As a simple illustration, take $G$ to be the discrete pair groupoid (= total binary relation) on the set $\{1,\ldots,n\}$, and let $\pi:=\pi_1:G\times\CC\to G$. The convolution algebra of $\pi$ can be identified with the matrix algebra $A=M_n(\CC)$, and the quantale $\topology(G)$ is the quantale of binary relations on $\{1,\ldots,n\}$. For each binary relation $U\in\topology(G)$ let $p^*(U)$ be the set of matrices $M$ such that $m_{ij}=0$ whenever $(i,j)\notin U$. Then $p^*(U)\in\Max A$. The mapping $p^*:\topology(G)\to\Max A$ is an injective homomorphism of involutive quantales, and it has a left adjoint $p_!$ which to each linear subspace $V\subseteq A$ assigns the relation
\[
\{(i,j)\in G\st m_{ij}\neq 0\textrm{ for some }M\in V\}\;.
\]
The semiopen map $p$ thus defined is a surjection in $\QS$. It satisfies FR2, and therefore also FR1 because $\topology(G)$ is unital.
\end{example}

\begin{example}
Let again $\pi=\pi_1:G\times\CC\to G$, now for $G$ a non-trivial finite discrete group. Then the convolution algebra $A$ can be identified with the group algebra $\CC G$, and again we obtain a surjective semiopen map $p:\Max A\to\topology(G)$, such that $p^*(U)=\CC U$ for all $U\subseteq G$ and
\[
p_!(V)=\{g\in G\st v_g\neq 0\textrm{ for some }v\in V\}
\]
for all $V\in\Max A$. Now $p$ satisfies FR1 but not FR2.
\end{example}

\section{Pullbacks of quantic spaces}

Given two sup-lattices $L$ and $M$ we write $L\otimes M$ for their tensor product, as in \cite{JT}, and $L\oplus M$ ($=L\times M$) for their coproduct, which we shall refer to as the \emph{direct sum} of $L$ and $M$. Similarly, $\bigoplus_i L_i$ is the coproduct of a family $(L_i)$ of sup-lattices.

\paragraph{Products.} Let $Y$ and $Q$ be involutive quantales. The product $Y*Q$ in $\QS$ can be constructed concretely as being the following sup-lattice:
\begin{eqnarray*}
Y*Q &:=& \bigoplus_{n=1}^\infty T_n\\
&\cong&Y\oplus Q\oplus (Y\otimes Q)\oplus (Q\otimes Y)\oplus (Y\otimes Q\otimes Y)\oplus (Q\otimes Y\otimes Q)\oplus \cdots\;,
\end{eqnarray*}
where
\begin{eqnarray*}
T_{4k+1} &=& Y\otimes (Q\otimes Y)^{\otimes k}\\
T_{4k+2} &=& Q\otimes (Y\otimes Q)^{\otimes k}\\
T_{4k+3} &=& (Y\otimes Q)^{\otimes (k+1)}\\
T_{4k+4} &=& (Q\otimes Y)^{\otimes (k+1)}
\end{eqnarray*}
for all $k\in\{0,1,2,\ldots\}$ and we use the notation $Z^{\otimes k}$ for the tensor product $Z\otimes\cdots\otimes Z$ of $k$ copies of $Z$, with $Z^{\otimes 0}$ being $\Omega=\mathcal P(\{*\})$.
Hence,
\[
(Y*Q)\otimes(Y*Q)\cong \bigoplus_{m,n=1}^\infty T_m\otimes T_n\;.
\]
The quantale multiplication on $Y*Q$ is given by a sup-lattice homomorphism
\[
(Y*Q)\otimes(Y*Q)\to Y*Q\;,
\]
which, due to the universal property of the sup-lattice coproduct, is equivalent to specifying, for each $m$ and $n$, a sup-lattice homomorphism
\[
\mu_{m,n}:T_m\otimes T_n\to Y*Q\;.
\]
Succintly, the homomorphisms $\mu_{m,n}$ are computed using the following rules, for all $y,y'\in Y$ and $a,a'\in Q$:
\begin{eqnarray*}
(\cdots\otimes y)(a\otimes\cdots) &=& \cdots\otimes y\otimes a\otimes \cdots\\
(\cdots\otimes a)(y\otimes\cdots) &=& \cdots\otimes a\otimes y\otimes \cdots\\
(\cdots\otimes y)(y'\otimes\cdots) &=& \cdots\otimes yy'\otimes \cdots\\
(\cdots\otimes a)(a'\otimes\cdots) &=& \cdots\otimes aa'\otimes \cdots
\end{eqnarray*}
For instance,
\[
\mu_{5,6}:(Y\otimes (Q\otimes Y))\otimes (Q\otimes (Y\otimes Q))\to Y*Q
\]
is given by
\[
(y\otimes (a\otimes y'))\otimes(b\otimes (z\otimes b'))\mapsto (y\otimes a)\otimes (y'\otimes b)\otimes (z\otimes b')\;,
\]
which means $\mu_{5,6}$ is an isomorphism
\[
T_5\otimes T_6\stackrel\cong\to T_{11}
\]
composed with the inclusion $T_{11}\to Y*Q$.
Another example, now one that uses the multiplication of $Y$, is
\[
\mu_{5,5}:(Y\otimes (Q\otimes Y))\otimes (Y\otimes (Q\otimes Y))\to Y*Q\;,
\]
which is given by
\[
(y\otimes (a\otimes y'))\otimes(z\otimes (b\otimes z'))\mapsto y\otimes (a\otimes y'z)\otimes (b\otimes z')\;,
\]
so $\mu_{5,5}$ is a homomorphism
\[
T_5\otimes T_5\to T_9
\]
following by the inclusion $T_9\to Y*Q$. We note that this homomorphism is well defined because the distributivity of the quantale product of $Y$ ensures that $\mu_{5,5}$, regarded as a multilinear map, preserves joins in each variable separately.
A complete specification of the multiplication involves sixteen different cases, corresponding to the four types of direct summands in the construction of $Y*Q$, and we omit it. We note that, similarly to the two examples just seen, for each $m$ and $n$ there is $k$ such that the homomorphism
$\mu_{m,n}$ is the composition of a homomorphism $T_m\otimes T_n\to T_k$ with the inclusion $T_k\to Y*Q$.

Due to the universal property of the tensor product the multiplication thus obtained on $Y*Q$ preserves joins in each variable. It is also associative due to the associativity of both $Y$ and $Q$, and due to the fact that if a product of pure tensors $\boldsymbol\tau\otimes(\boldsymbol\tau'\otimes \boldsymbol\tau'')$ is valued in $T_n$ then so is $(\boldsymbol\tau\otimes\boldsymbol\tau')\otimes \boldsymbol\tau''$, and thus no bracketing mismatches occur. Therefore $Y*Q$ is a quantale. It is also an involutive quantale, with the involution $Y*Q\to Y*Q$ defined again separately for each $T_n$ by the generic rule
\[
(\cdots a\otimes y\otimes b\otimes z \cdots)^*\quad =\quad \cdots z^*\otimes b^*\otimes y^*\otimes a^*\cdots\;.
\]
The embeddings $Y=T_1\to Y*Q$ and $Q=T_2\to Y*Q$ are therefore homomorphisms of involutive quantales, and they provide the projections
\[
\xymatrix{
Y&&Y*Q\ar[ll]_{\pi_1}\ar[rr]^{\pi_2}&&Q
}
\]
of the product in $QS$.
The pairing $\langle f,g\rangle$ of two maps $f$ and $g$
\[
\xymatrix{
&&R\ar[ddll]_f\ar[ddrr]^g\ar@{.>}[dd]|{\langle f,g\rangle}\\
~\\
Y&&Y*Q\ar[ll]^{\pi_1}\ar[rr]_{\pi_2}&&Q
}
\]
is again defined separately on each $T_n$; that is, its inverse image $\langle f,g\rangle^*$ is obtained by linear extension from the assignments
\[
\xymatrix{
\cdots Y\otimes Q\otimes Y\otimes Q\cdots\ar[rr]_-{\langle f,g\rangle^*=[f^*,g^*]}&& R\\
\cdots y\otimes a\otimes y'\otimes a'\cdots\ar@{|->}[rr]&&\cdots f^*(y)g^*(a)f^*(y')g^*(a')\cdots
}
\]
It is straightforward to verify that $\langle f,g\rangle$ is a map of involutive quantales, and that it is the unique map making the above diagram commute, so $Y*Q$ is a product in $QS$ as intended.

\paragraph{Pullbacks.} Let the following be a pullback diagram in $\QS$:
\[
\xymatrix{
Y*_X Q\ar[rr]^{\pi_2}\ar[d]_{\pi_1}&&Q\ar[d]^p\\
Y\ar[rr]_{f}&&X
}
\]
Of course, $Y*_X Q$ equals $(Y*Q)_j$ where $j$ is the least quantic nucleus on $Y*Q$ such that $j(p^*(x))=j(f^*(x))$ for all $x\in X$ (for notational convenience we shall usually identify $f^*(x)$ with $\pi_1^*(f^*(x))$ and $p^*(x)$ with $\pi_2^*(p^*(x))$). By Lemma~\ref{relations}, taking $\widetilde R$ to be the closure of $R$ under involution and multiplication by elements of $Y*Q$, we obtain
\begin{eqnarray*}
Y*_X Q&=&\{\alpha\in Y*Q\st\forall_{(r,s)\in \widetilde R}\ \ r\le \alpha\shortiff s\le \alpha\}\;.
\end{eqnarray*}
More explicitly, and taking into account that in this case $R$ is already closed under involution, $Y*_X Q$ consists of those $\alpha\in Y*Q$ such that for all $x\in X$ and all $z,w\in Y*Q$ the following conditions hold:
\begin{eqnarray}
p^*(x)\le \alpha&\iff& f^*(x)\le \alpha\label{pbrels1}\\
zp^*(x)\le \alpha&\iff& zf^*(x)\le \alpha\\
p^*(x)w\le \alpha &\iff&f^*(x)w\le \alpha\\
zp^*(x)w\le \alpha&\iff& zf^*(x)w\le \alpha\label{pbrels4}
\end{eqnarray}
Moreover, it suffices to take $z$ and $w$ to be pure tensors
\[
\cdots y\otimes a\otimes y'\otimes a'\cdots\;,
\]
and thus $Y*_X Q$ consists of those $\alpha\in Y* Q$ that satisfy the following nine types of conditions, for all $a,a'\in Q$ and all $y,y'\in Y$, and all pure tensors $\boldsymbol \tau,\boldsymbol \tau'\in Y*Q$ that are appropriate in the sense that they yield $\otimes$-strings $\boldsymbol\tau \otimes a$, $y\otimes\boldsymbol\tau'$, etc., that alternate the elements of $Q$ and $Y$:

\begin{eqnarray}
p^*(x)\le \alpha&\iff& f^*(x)\le \alpha\label{ninetypes1}\\
p^*(x)a\otimes \boldsymbol \tau\ \le \alpha &\iff& f^*(x)\otimes a\otimes \boldsymbol \tau\ \le \alpha\label{ninetypes2}\\
p^*(x)\otimes y \otimes \boldsymbol \tau\ \le \alpha &\iff& f^*(x)y\otimes \boldsymbol \tau\ \le \alpha\label{ninetypes3}\\
\boldsymbol \tau \otimes ap^*(x)\le \alpha &\iff& \boldsymbol \tau \otimes a\otimes f^*(x)\le \alpha\label{ninetypes4}\\
\boldsymbol \tau \otimes y\otimes p^*(x)\le \alpha &\iff& \boldsymbol \tau \otimes yf^*(x)\le \alpha\label{ninetypes5}\\
\boldsymbol \tau \otimes ap^*(x)a'\otimes \boldsymbol \tau'\ \le \alpha &\iff& \boldsymbol \tau \otimes a\otimes f^*(x)\otimes a'\otimes \boldsymbol \tau'\ \le \alpha\label{ninetypes6}\\
\boldsymbol \tau \otimes y\otimes p^*(x)a\otimes \boldsymbol \tau'\ \le \alpha &\iff& \boldsymbol \tau \otimes yf^*(x)\otimes a\otimes \boldsymbol \tau'\ \le \alpha\label{ninetypes7}\\
\boldsymbol \tau\otimes ap^*(x)\otimes y \otimes \boldsymbol \tau'\ \le \alpha &\iff& \boldsymbol \tau \otimes a\otimes f^*(x)y\otimes \boldsymbol \tau'\ \le \alpha\label{ninetypes8}\\
\boldsymbol \tau \otimes y\otimes p^*(x)\otimes y'\otimes \boldsymbol \tau'\ \le \alpha &\iff& \boldsymbol \tau \otimes yf^*(x)y'\otimes \boldsymbol \tau'\ \le \alpha\label{ninetypes9}
\end{eqnarray}

\section{Stability under pullbacks}\label{sec:stability}

\begin{theorem}\label{thm:pbstab}
Let $p:Q\to X$ be a semiopen surjective map of involutive quantales satisfying both FR1 and FR2, and let the following be a pullback diagram in $\QS$:
\begin{equation}\label{pullback}
\xymatrix{
Y*_X Q\ar[rr]^{\pi_2}\ar[d]_{\pi_1}&&Q\ar[d]^p\\
Y\ar[rr]_{f}&&X
}
\end{equation}
Then $\pi_1$ is a semiopen surjection satisfying both FR1 and FR2, and
the following diagram in the category of sup-lattices is commutative (Beck--Chevalley condition):
\begin{equation}\label{beckchevalley}
\xymatrix{
Y*_X Q\ar@{<-}[rr]^{\pi^*_2}\ar[d]_{{\pi_1}_!}&&Q\ar[d]^{p_!}\\
Y\ar@{<-}[rr]_{f^*}&&X
}
\end{equation}
\end{theorem}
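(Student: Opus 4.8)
The plan is to construct the direct image ${\pi_1}_!$ explicitly and then read off every required property from its defining formula. First I would define a sup-lattice homomorphism $\psi:Y*Q\to Y$ on pure tensors by replacing each $Q$-factor $a$ by $f^*(p_!(a))\in Y$, leaving each $Y$-factor unchanged, and multiplying the resulting string together in $Y$; that is, on an alternating pure tensor $c_1\otimes\cdots\otimes c_k$ I set $\psi(c_1\otimes\cdots\otimes c_k)=d_1\cdots d_k$ with $d_i=c_i$ if $c_i\in Y$ and $d_i=f^*(p_!(c_i))$ if $c_i\in Q$. Since $p_!$, $f^*$ and the multiplication of $Y$ all preserve joins in each variable, this extends to a well-defined sup-lattice homomorphism on each summand $T_n$, and hence on $Y*Q$.

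The first key step is to show that $\psi$ factors through the nucleus $j$ with $Y*_X Q=(Y*Q)_j$. By Lemma~\ref{relations} it suffices to check that $\psi(zp^*(x)w)=\psi(zf^*(x)w)$ for $x\in X$ and pure tensors $z,w$, i.e. to run through the nine types \eqref{ninetypes1}--\eqref{ninetypes9}. In each case a $Q$-factor equal to $p^*(x)$ contributes $f^*(p_!(p^*(x)))=f^*(x)$ by surjectivity, which matches the contribution $f^*(x)$ of the corresponding $Y$-factor; when $p^*(x)$ is adjacent to a single $Q$-factor, so that multiplication in $Q$ merges them, the identities $p_!(ap^*(x))=p_!(a)x$ and $p_!(p^*(x)a)=xp_!(a)$ (FR1 and its right-hand companion) keep the two sides equal, and in the genuinely two-sided case $ap^*(x)a'$ it is exactly FR2, $p_!(ap^*(x)a')=p_!(a)xp_!(a')$, that is needed. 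This yields a sup-lattice homomorphism $\bar\psi:Y*_X Q\to Y$ with $\bar\psi\circ j=\psi$.

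Next I would verify that $\bar\psi$ is left adjoint to $\pi_1^*$ (recall $\pi_1^*(y)=j(y)$ for $y\in T_1$ and $\pi_2^*(a)=j(a)$ for $a\in T_2$). The counit is immediate, $\bar\psi(\pi_1^*(y))=\psi(y)=y$, which also gives surjectivity of $\pi_1$ once the adjunction is in place. For the unit $\alpha\le\pi_1^*(\bar\psi(\alpha))$ it is enough, by join-preservation, to prove $j(t)\le\pi_1^*(\psi(t))$ for a pure tensor $t=c_1\otimes\cdots\otimes c_k$. Here I would use $j(t)=j(c_1)\cdots j(c_k)$ in $Y*_X Q$, the pullback relation $\pi_2^*\circ p^*=\pi_1^*\circ f^*$, and the unit $a\le p^*(p_!(a))$ of $p_!\dashv p^*$ to obtain, factorwise, $j(c_i)\le\pi_1^*(d_i)$ (with equality on $Y$-factors and $\pi_2^*(a)\le\pi_2^*(p^*p_!(a))=\pi_1^*(f^*p_!(a))$ on $Q$-factors); monotonicity of the quantale product then gives $j(t)=\prod_i j(c_i)\le\prod_i\pi_1^*(d_i)=\pi_1^*(\psi(t))$. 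Thus $\pi_1$ is semiopen with ${\pi_1}_!=\bar\psi$ and is a surjection, and the Beck--Chevalley condition \eqref{beckchevalley} drops straight out of the formula for $\psi$, since ${\pi_1}_!(\pi_2^*(a))=\bar\psi(j(a))=\psi(a)=f^*(p_!(a))$.

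Finally, FR1 and FR2 for $\pi_1$ would be checked on generators $\alpha=j(s)$, $\beta=j(t)$ with $s,t$ pure tensors, extending to arbitrary elements by join-preservation. Writing $\alpha\,\pi_1^*(y)\,\beta=j(s)\,j(y)\,j(t)=j(s\cdot y\cdot t)$ and applying $\psi$, the inserted $Y$-element $y$ either sits between two factors of different type or merges with an adjacent $Y$-factor; in every case no two $Q$-factors are ever brought together, so each $Q$-factor still contributes its own value $f^*(p_!(\cdot))$ and one reads off $\psi(s\cdot y\cdot t)=\psi(s)\,y\,\psi(t)$, which is FR2, while FR1 is the same computation with the right-hand factor omitted. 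I expect the main obstacle to be precisely the bookkeeping of these tensor-string multiplications: choosing the formula $a\mapsto f^*(p_!(a))$ (rather than $a\mapsto p_!(a)$) so that $\psi$ respects the pullback relation, and tracking how factors merge at the seams, is where the hypotheses — surjectivity, FR1 and, for the sandwiched case, FR2 — enter and must be used in exactly the right combination.
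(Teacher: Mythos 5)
Your proposal is correct and follows essentially the same route as the paper: the same direct image candidate (replace each $Q$-factor $a$ by $f^*(p_!(a))$ and multiply in $Y$), the same factorization through the quotient via the nine relation types (surjectivity for the lone and $Y$-adjacent cases, FR1 for the one-sided $Q$-adjacent cases, FR2 for the sandwiched case), the same adjunction check using $a\le p^*(p_!(a))$ and the pullback relation, and the same reading-off of Beck--Chevalley and of FR1/FR2 on pure-tensor generators. No gaps worth noting.
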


\begin{proof}
Consider the pullback diagram \eqref{pullback} with $p$ a semiopen surjection satisfying FR1 and FR2. In order to show that $\pi_1$ is semiopen we begin by defining a sup-lattice homomorphism $Y*_X Q\to Y$ which will then be shown to be the required direct image homomorphism of $\pi_1$. First let us recall the following notation from the definition of the product in $\QS$, where $k\in\{0,1,2,\ldots\}$:
\begin{eqnarray*}
Y*Q&=&\bigoplus_{n=1}^\infty T_n\\
T_{4k+1} &=& Y\otimes (Q\otimes Y)^{\otimes k}\\
T_{4k+2} &=& Q\otimes (Y\otimes Q)^{\otimes k}\\
T_{4k+3} &=& (Y\otimes Q)^{\otimes (k+1)}\\
T_{4k+4} &=& (Q\otimes Y)^{\otimes (k+1)}\;.
\end{eqnarray*}
For each $n$ define a sup-lattice homomorphism $h_n:T_n\to Y$:
\begin{eqnarray}
h_1(y) &=& y\label{h1condition}\\
h_2(a) &=& f^*(p_!(a))\\
h_3(y\otimes a) &=& yf^*(p_!(a))\\
h_4(a\otimes y) &=& f^*(p_!(a))y
\end{eqnarray}
\[\vdots\]
\begin{eqnarray}
h_n(\cdots y\otimes a\otimes y'\otimes a'\cdots)&=&\cdots yf^*(p_!(a))y'f^*(p_!(a'))\cdots \label{hncondition}\\
h&=&[h_n]:Y*Q\to Y\;: \label{hcondition}
\end{eqnarray}
\[
\xymatrix{
Y*Q\ar@{.>}[r]^-h&Y\\
T_n\ar[ur]_{h_n}\ar[u]
}
\]
We prove that $h$ factors through the surjection $q:Y*Q\to Y*_X Q$ by showing that it satisfies the following nine conditions [\cf\ \eqref{ninetypes1}--\eqref{ninetypes9}] for all $a,a'\in Q$ and all $y,y'\in Y$, and all appropriate alternated pure tensors $\boldsymbol \tau,\boldsymbol \tau'\in Y*Q$ as in \eqref{ninetypes1}--\eqref{ninetypes9}:
\begin{eqnarray}
h(p^*(x))&=& h(f^*(x))\label{ninetypesagain1}\\
h(p^*(x)a\otimes \boldsymbol \tau) &=& h(f^*(x)\otimes a\otimes \boldsymbol \tau)\label{ninetypesagain2}\\
h(p^*(x)\otimes y \otimes \boldsymbol \tau) &=& h(f^*(x)y\otimes\boldsymbol \tau)\label{ninetypesagain3}\\
h(\boldsymbol \tau \otimes ap^*(x)) &=& h(\boldsymbol \tau \otimes a\otimes f^*(x))\label{ninetypesagain4}\\
h(\boldsymbol \tau \otimes y\otimes p^*(x)) &=& h(\boldsymbol \tau \otimes yf^*(x))\label{ninetypesagain5}\\
h(\boldsymbol \tau \otimes ap^*(x)a'\otimes \boldsymbol \tau') &=& h(\boldsymbol \tau \otimes a\otimes f^*(x)\otimes a'\otimes \boldsymbol \tau')\label{ninetypesagain6}\\
h(\boldsymbol \tau \otimes y\otimes p^*(x)a\otimes \boldsymbol \tau') &=& h(\boldsymbol \tau \otimes yf^*(x)\otimes a\otimes \boldsymbol \tau')\label{ninetypesagain7}\\
h(\boldsymbol \tau\otimes ap^*(x)\otimes y \otimes \boldsymbol \tau') &=& h(\boldsymbol \tau \otimes a\otimes f^*(x)y\otimes \boldsymbol \tau')\label{ninetypesagain8}\\
h(\boldsymbol \tau \otimes y\otimes p^*(x)\otimes y'\otimes \boldsymbol \tau') &=&h(\boldsymbol \tau \otimes yf^*(x)y'\otimes\boldsymbol \tau')\;.\label{ninetypesagain9}
\end{eqnarray}
Condition \eqref{ninetypesagain1} is a consequence of surjectivity:
\[
h(p^*(x))=h_2(p^*(x))=f^*(p_!(p^*(x)))=f^*(x)=h_1(f^*(x))=h(f^*(x))\;.
\]
Similarly, \eqref{ninetypesagain3}, \eqref{ninetypesagain5} and \eqref{ninetypesagain9} follow from surjectivity: for instance, for \eqref{ninetypesagain9} we obtain, applying \eqref{h1condition}--\eqref{hcondition},
\begin{eqnarray*}
h(\boldsymbol \tau\otimes y\otimes p^*(x)\otimes y'\otimes\boldsymbol \tau')&=&h(\boldsymbol \tau) yf^*(p_!(p^*(x)))y'h(\boldsymbol \tau')=h(\boldsymbol \tau) yf^*(x)y'h(\boldsymbol \tau')\\
&=&h(\boldsymbol \tau \otimes yf^*(x)y'\otimes\boldsymbol \tau')\;.
\end{eqnarray*}
Again applying \eqref{h1condition}--\eqref{hcondition}, conditions \eqref{ninetypesagain2}, \eqref{ninetypesagain4}, \eqref{ninetypesagain7} and \eqref{ninetypesagain8} follow from FR1: for instance, for \eqref{ninetypesagain2} we have
\begin{eqnarray*}
h(p^*(x)a\otimes\boldsymbol\tau)&=&f^*(p_!(p^*(x)a))h(\boldsymbol\tau)=f^*(xp_!(a))h(\boldsymbol\tau)\\
&=&
f^*(x)f^*(p_!(a))h(\boldsymbol\tau)=
h(f^*(x)\otimes a\otimes\boldsymbol\tau)\;.
\end{eqnarray*}
Finally, still applying \eqref{h1condition}--\eqref{hcondition}, \eqref{ninetypesagain6} follows from FR2:
\begin{eqnarray*}
h(\boldsymbol\tau \otimes ap^*(x)a'\otimes\boldsymbol\tau') &=& h(\boldsymbol\tau) f^*(p_!(ap^*(x)a'))h(\boldsymbol\tau')
=h(\boldsymbol\tau) f^*(p_!(a) x p_!(a'))h(\boldsymbol\tau')\\
&=& h(\boldsymbol\tau) f^*(p_!(a))f^*(x)f^*(p_!(a'))h(\boldsymbol\tau')\\
& = &
h(\boldsymbol\tau \otimes a\otimes f^*(x)\otimes a'\otimes \boldsymbol\tau')\;.
\end{eqnarray*}
Since $h$ respects all the conditions \eqref{ninetypesagain1}--\eqref{ninetypesagain9} it factors through $Y*_X Q$ via a sup-lattice homomorphism $\tilde h$:
\[
\xymatrix{Y*Q\ar@(ur,ul)[rr]^h\ar@{->>}[r]_-q&Y*_X Q\ar[r]_-{\tilde h}&Y}
\]
Now we show that $\tilde h$ is left adjoint to $\pi_1^*$.
The counit of the adjunction is immediate, since $\tilde h(\pi_1^*(y))=y$ for all $y\in Y$. This also shows that $\pi_1$ is a surjection. In order to prove the unit of the adjunction let us use abbreviations such as
\[
\cdots a\qtimes y\qtimes a'\qtimes y'\cdots \ \ \ :=\ \ \ q(\cdots a\otimes y\otimes a'\otimes y'\cdots)\;,
\]
and let us consider a ``word''
\[w\quad  :=\quad  a_1\qtimes y_1\qtimes\cdots\qtimes a_n\qtimes y_n\quad  \in \quad  q(T_{4n})\quad  \subseteq \quad Y*_X Q\;.\]
We have
\begin{eqnarray*}
w&=& a_1\qtimes y_1\qtimes\cdots\qtimes a_n\qtimes y_n\\
&\le&p^*(p_!(a_1))\qtimes y_1\qtimes\cdots\qtimes p^*(p_!(a_n))\qtimes y_n\\
&=&\pi_1^*\bigl(\underbrace{f^*(p_!(a_1)) y_1\cdots f^*(p_!(a_n)) y_n}_{\in T_1=Y}\bigr)\\
&=&\pi_1^*\bigl( h(a_1\otimes y_1\otimes\cdots\otimes a_n\otimes y_n)\bigr)\\
&=&\pi_1^*(\tilde h(w))\;.
\end{eqnarray*}
Similar reasoning applies to any $z_1\qtimes\cdots\qtimes z_m$ with the $z_i's$ taken alternately from $Q$ and $Y$, so we conclude that
\[
\alpha\le \pi_1^*\bigl(\tilde h(\alpha)\bigr)
\]
for all $\alpha\in Y*_X Q$, thus showing that $\tilde h$ is left adjoint to $\pi_1^*$, so $\pi_1$ is semiopen.
It is also clear that $\tilde h$ satisfies FR1 because for all $a,a'\in Q$ and $y,y'\in Y$, with $\boldsymbol\tau$ being an image by $q$ of an appropriate pure tensor in $Y*Q$, we have
\begin{eqnarray*}
\tilde h\bigl(\underbrace{y\pi_1^*(y')}_{\in T_1=Y}\bigr)&=&\tilde h(yy')\\
&=&\tilde h(y)y'\;,
\end{eqnarray*}
\begin{eqnarray*}
\tilde h\bigl((\boldsymbol\tau \qtimes y)\pi_1^*(y')\bigr)&=&\tilde h(\boldsymbol\tau\qtimes yy')\\
&=&\tilde h(\boldsymbol\tau) yy'\\
&=&\tilde h(\boldsymbol\tau \qtimes y)y'\;,
\end{eqnarray*}
and
\begin{eqnarray*}
\tilde h\bigl(a\pi_1^*(y)\bigr)&=&\tilde h(a\qtimes y)\\
&=&f^*(p_!(a))y\\
&=&\tilde h(a)y\;,
\end{eqnarray*}
\begin{eqnarray*}
\tilde h\bigl((\boldsymbol\tau\qtimes a)\pi_1^*(y)\bigr)&=&\tilde h(\boldsymbol\tau \qtimes a\qtimes y)\\
&=&\tilde h(\boldsymbol\tau) f^*(p_!(a))y\\
&=&\tilde h(\boldsymbol\tau \qtimes a)y\;.
\end{eqnarray*}
FR2 is proved in a similar way, now computing
$
\tilde h\bigl([\boldsymbol\tau\qtimes] z\pi_1^*(y)z'[\qtimes\boldsymbol\tau']\bigr)
$
for a total of sixteen combinations with $z,z'\in Q\cup Y$. To conclude, the commutativity of the diagram \eqref{beckchevalley} is just the statement that $\tilde h(a)=f^*(p_!(a))$ for all $a\in Q$.
\end{proof}

\begin{corollary}
Any weakly open surjection satisfying FR2 is an open map.
\end{corollary}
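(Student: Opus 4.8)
The plan is to observe that this corollary is essentially a repackaging of Theorem~\ref{thm:pbstab}, so that almost all of the work has already been carried out. First I would unwind the terminology. A \emph{weakly open surjection satisfying FR2} is, by definition, a semiopen map $p:Q\to X$ that is a surjection and satisfies both FR1 (this being exactly what \emph{weakly open} means) and FR2. These are precisely the hypotheses imposed on $p$ in Theorem~\ref{thm:pbstab}.

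Next I would recall the working definition of openness adopted earlier: a semiopen map $p:Q\to X$ is \emph{open} precisely when, for every map $f:Y\to X$, the pullback $f^*(p)$ in $\QS$ is weakly open, \ie\ semiopen and satisfying FR1. Thus, to establish openness of $p$, it suffices to verify that in an arbitrary pullback diagram of the form~\eqref{pullback} the projection $\pi_1$ is semiopen and satisfies FR1.

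But this is exactly (part of) the conclusion of Theorem~\ref{thm:pbstab}: under the hypotheses just identified, the theorem asserts that $\pi_1$ is a semiopen surjection satisfying both FR1 and FR2. In particular $\pi_1$ is semiopen and satisfies FR1, hence is weakly open. Since the map $f:Y\to X$ was arbitrary, every pullback of $p$ along an arbitrary map is weakly open, and therefore $p$ is open by definition.

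I do not expect any genuine obstacle at this stage, since the theorem already carries all of the analytic content. The only point requiring care is the purely verbal bookkeeping: checking that the hypotheses of the corollary match those of the theorem verbatim, and noting that the theorem's conclusion is in fact \emph{stronger} than what the definition of open map requires (it yields FR2 for $\pi_1$ as well, together with surjectivity and the Beck--Chevalley condition), so that weak openness of the pullback, which is all that the definition of openness demands, follows a fortiori.
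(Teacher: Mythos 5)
Your proposal is correct and matches the paper's intended argument exactly: the corollary is an immediate consequence of Theorem~\ref{thm:pbstab}, since the hypotheses coincide (weakly open surjection with FR2 $=$ semiopen surjection with FR1 and FR2) and the theorem guarantees that every pullback $\pi_1 = f^*(p)$ is a semiopen map satisfying FR1, \ie\ weakly open, which is precisely the definition of openness. Your remark that the theorem delivers strictly more (surjectivity, FR2, and Beck--Chevalley for the pullback) than the definition requires is also accurate.
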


\begin{bibdiv}

\begin{biblist}

\bib{BRB}{article}{
  author={Borceux, Francis},
  author={Rosick{\'y}, Ji{\v {r}}{\'{\i }}},
  author={Van den Bossche, Gilberte},
  title={Quantales and $C\sp *$-algebras},
  journal={J. London Math. Soc. (2)},
  volume={40},
  date={1989},
  number={3},
  pages={398--404},
  issn={0024-6107},
  review={\MR {1053610 (91d:46075)}},
}

\bib{Bun08}{article}{
  author={Buneci, M{\u {a}}d{\u {a}}lina Roxana},
  title={Groupoid categories},
  conference={ title={Perspectives in operator algebras and mathematical physics}, },
  book={ series={Theta Ser. Adv. Math.}, volume={8}, publisher={Theta, Bucharest}, },
  date={2008},
  pages={27--40},
  review={\MR {2433025 (2010b:22007)}},
}

\bib{BS05}{article}{
  author={Buneci, M{\u {a}}d{\u {a}}lina Roxana},
  author={Stachura, Piotr},
  title={Morphisms of locally compact groupoids endowed with Haar systems},
  eprint={arXiv:math.OA/0511613v1},
  date={2005},
}

\bib{Bunge}{article}{
  author={Bunge, Marta},
  title={An application of descent to a classification theorem for toposes},
  journal={Math. Proc. Cambridge Philos. Soc.},
  volume={107},
  date={1990},
  number={1},
  pages={59--79},
  issn={0305-0041},
  review={\MR {1021873 (90k:18002)}},
  doi={10.1017/S0305004100068365},
}

\bib{HenryPhD}{thesis}{
  author={Henry, Simon},
  title={Des topos \`a la g\'eom\'etrie non commutative par l'\'etude des espaces de Hilbert internes},
  institution={\'Ecole Doctorale de Science Math\'ematiques de Paris Centre},
  year={September 25, 2014},
  type={Th\`ese de Doctorat},
}

\bib{HeymansGrQu}{article}{
  author={Heymans, Hans},
  title={Sheaves on involutive quantales: Grothendieck quantales},
  journal={Fuzzy Sets and Systems},
  volume={256},
  date={2014},
  pages={117--148},
  issn={0165-0114},
  review={\MR {3263495}},
  doi={10.1016/j.fss.2013.07.008},
}

\bib{HS2}{article}{
  author={Heymans, Hans},
  author={Stubbe, Isar},
  title={Modules on involutive quantales: canonical Hilbert structure, applications to sheaf theory},
  journal={Order},
  volume={26},
  date={2009},
  number={2},
  pages={177--196},
  issn={0167-8094},
  review={\MR {2525366}},
  doi={10.1007/s11083-009-9116-x},
}

\bib{stonespaces}{book}{
  author={Johnstone, Peter T.},
  title={Stone Spaces},
  series={Cambridge Studies in Advanced Mathematics},
  volume={3},
  note={Reprint of the 1982 edition},
  publisher={Cambridge University Press},
  place={Cambridge},
  date={1986},
  pages={xxii+370},
  isbn={0-521-33779-8},
  review={\MR {861951 (87m:54001)}},
}

\bib{JT}{article}{
  author={Joyal, Andr{\'e}},
  author={Tierney, Myles},
  title={An extension of the Galois theory of Grothendieck},
  journal={Mem. Amer. Math. Soc.},
  volume={51},
  date={1984},
  number={309},
  pages={vii+71},
  issn={0065-9266},
  review={\MR {756176 (86d:18002)}},
}

\bib{K02}{article}{
  author={Kruml, David},
  title={Spatial quantales},
  journal={Appl. Categ. Structures},
  volume={10},
  date={2002},
  number={1},
  pages={49--62},
  issn={0927-2852},
  review={\MR {1883084 (2002m:06010)}},
}
\bib{KR}{article}{
   author={Kruml, David},
   author={Resende, Pedro},
   title={On quantales that classify $C\sp \ast$-algebras},
   language={English, with French summary},
   journal={Cah. Topol. G\'eom. Diff\'er. Cat\'eg.},
   volume={45},
   date={2004},
   number={4},
   pages={287--296},
   issn={1245-530X},
   review={\MR{2108195 (2006b:46096)}},
}
\bib{Kumjian98}{article}{
  author={Kumjian, Alex},
  title={Fell bundles over groupoids},
  journal={Proc. Amer. Math. Soc.},
  volume={126},
  date={1998},
  number={4},
  pages={1115--1125},
  issn={0002-9939},
  review={\MR {1443836 (98i:46055)}},
  doi={10.1090/S0002-9939-98-04240-3},
}

\bib{La01}{article}{
  author={Landsman, N. P.},
  title={Operator algebras and Poisson manifolds associated to groupoids},
  journal={Comm. Math. Phys.},
  volume={222},
  date={2001},
  number={1},
  pages={97--116},
  issn={0010-3616},
  review={\MR {1853865 (2002f:46142)}},
  doi={10.1007/s002200100496},
}

\bib{Moer90}{article}{
  author={Moerdijk, Ieke},
  title={The classifying topos of a continuous groupoid. II},
  language={English, with French summary},
  journal={Cahiers Topologie G\'eom. Diff\'erentielle Cat\'eg.},
  volume={31},
  date={1990},
  number={2},
  pages={137--168},
  issn={0008-0004},
  review={\MR {1080241 (92c:18003)}},
}

\bib{Moer87}{article}{
  author={Moerdijk, Ieke},
  title={Toposes and groupoids},
  conference={ title={Categorical algebra and its applications}, address={Louvain-La-Neuve}, date={1987}, },
  book={ series={Lecture Notes in Math.}, volume={1348}, publisher={Springer}, place={Berlin}, },
  date={1988},
  pages={280--298},
  review={\MR {975977 (89m:18003)}},
  doi={10.1007/BFb0081366},
}

\bib{Mr99}{article}{
  author={Mr{\v {c}}un, Janez},
  title={Functoriality of the bimodule associated to a Hilsum--Skandalis map},
  journal={$K$-Theory},
  volume={18},
  date={1999},
  number={3},
  pages={235--253},
  issn={0920-3036},
  review={\MR {1722796 (2001k:22004)}},
  doi={10.1023/A:1007773511327},
}

\bib{MRW87}{article}{
  author={Muhly, Paul S.},
  author={Renault, Jean N.},
  author={Williams, Dana P.},
  title={Equivalence and isomorphism for groupoid $C^\ast $-algebras},
  journal={J. Operator Theory},
  volume={17},
  date={1987},
  number={1},
  pages={3--22},
  issn={0379-4024},
  review={\MR {873460 (88h:46123)}},
}

\bib{MP1}{article}{
  author={Mulvey, Christopher J.},
  author={Pelletier, Joan Wick},
  title={On the quantisation of points},
  journal={J. Pure Appl. Algebra},
  volume={159},
  date={2001},
  number={2-3},
  pages={231--295},
  issn={0022-4049},
  review={\MR {1828940 (2002g:46126)}},
}

\bib{MP2}{article}{
  author={Mulvey, Christopher J.},
  author={Pelletier, Joan Wick},
  title={On the quantisation of spaces},
  note={Special volume celebrating the 70th birthday of Professor Max Kelly},
  journal={J. Pure Appl. Algebra},
  volume={175},
  date={2002},
  number={1-3},
  pages={289--325},
  issn={0022-4049},
  review={\MR {1935983 (2003m:06014)}},
}

\bib{MR}{article}{
  author={Mulvey, Christopher J.},
  author={Resende, Pedro},
  title={A noncommutative theory of Penrose tilings},
  journal={Internat. J. Theoret. Phys.},
  volume={44},
  date={2005},
  number={6},
  pages={655--689},
  issn={0020-7748},
  review={\MR {2150184 (2006a:58011)}},
}

\bib{PR12}{article}{
  author={Protin, M. Clarence},
  author={Resende, Pedro},
  title={Quantales of open groupoids},
  journal={J. Noncommut. Geom.},
  volume={6},
  date={2012},
  number={2},
  pages={199--247},
  issn={1661-6952},
  review={\MR {2914865}},
  doi={10.4171/JNCG/90},
}

\bib{RV}{article}{
  author={Resende, Pedro},
  author={Vickers, Steven},
  title={Localic sup-lattices and tropological systems},
  note={Topology in computer science (Schlo\ss \ Dagstuhl, 2000)},
  journal={Theoret. Comput. Sci.},
  volume={305},
  date={2003},
  number={1-3},
  pages={311--346},
  issn={0304-3975},
  review={\MR {2013577 (2004i:68130)}},
  doi={10.1016/S0304-3975(02)00702-8},
}

\bib{Re07}{article}{
  author={Resende, Pedro},
  title={\'Etale groupoids and their quantales},
  journal={Adv. Math.},
  volume={208},
  date={2007},
  number={1},
  pages={147--209},
  issn={0001-8708},
  review={\MR {2304314 (2008c:22002)}},
}

\bib{GSQS}{article}{
  author={Resende, Pedro},
  title={Groupoid sheaves as quantale sheaves},
  journal={J. Pure Appl. Algebra},
  volume={216},
  date={2012},
  number={1},
  pages={41--70},
  issn={0022-4049},
  review={\MR {2826418}},
  doi={10.1016/j.jpaa.2011.05.002},
}

\bib{Re15}{article}{
  author={Resende, Pedro},
  title={Functoriality of groupoid quantales. I},
  journal={J. Pure Appl. Algebra},
  volume={219},
  date={2015},
  number={8},
  pages={3089--3109},
  issn={0022-4049},
  review={\MR {3320209}},
  doi={10.1016/j.jpaa.2014.10.004},
}

\bib{QFB}{report}{
  author={Resende, Pedro},
  title={Quantales and Fell bundles},
  eprint={arXiv:1701.08653v2},
  year={2017},
}

\bib{Rosenthal1}{book}{
  author={Rosenthal, Kimmo I.},
  title={Quantales and Their Applications},
  series={Pitman Research Notes in Mathematics Series},
  volume={234},
  publisher={Longman Scientific \& Technical},
  place={Harlow},
  date={1990},
  pages={x+165},
  isbn={0-582-06423-6},
  review={\MR {1088258 (92e:06028)}},
}

\bib{Rosicky}{article}{
  author={Rosick{\'y}, Ji{\v {r}}{\'{\i }}},
  title={Multiplicative lattices and $C\sp *$-algebras},
  language={English, with French summary},
  journal={Cahiers Topologie G\'eom. Diff\'erentielle Cat\'eg.},
  volume={30},
  date={1989},
  number={2},
  pages={95--110},
  issn={0008-0004},
  review={\MR {1004734 (91e:46079)}},
}

\end{biblist}

\end{bibdiv}

\vspace*{5mm}
\noindent {\sc
Centro de An\'alise Matem\'atica, Geometria e Sistemas Din\^amicos
Departamento de Matem\'{a}tica, Instituto Superior T\'{e}cnico\\
Universidade de Lisboa\\
Av.\ Rovisco Pais 1, 1049-001 Lisboa, Portugal}\\
{\it E-mail:} {\sf pmr@math.tecnico.ulisboa.pt}

\end{document}